\def\newaliasedtheorem#1[#2]#3{
  \newaliascnt{#1@alt}{#2}
  \newtheorem{#1}[#1@alt]{#3}
  \expandafter\newcommand\csname #1@altname\endcsname{#3}
}
\numberwithin{equation}{section}
\newtheoremstyle{slanted}{\topsep}{\topsep}{\slshape}{}{\bfseries}{.}{.5em}{}
\theoremstyle{plain}
\newtheorem{theorem}{Theorem}[section]
\theoremstyle{definition}
\theoremstyle{remark}
\newcommand{\setN}{\mathbb{N}}
\newcommand{\R}{\mathbb{R}}
\newcommand{\eps}{\varepsilon}
\let\altphi\phi
\let\phi\varphi
\let\varphi\altphi
\let\altphi\undefined
\newcommand{\di}{\mathop{}\!\mathrm{d}}
\DeclareMathOperator{\supp}{supp}
\newcommand{\Ch}{{\sf Ch}}
\newcommand{\Opt}{\mathrm{OptGeo}}
\newcommand{\meass}{(X,\sfd,\meas)}
\newcommand{\model}{(I_{K,N},\sfd_{eu},\meas_{K,N})}
\DeclareMathOperator{\Geo}{Geo}
\DeclareMathOperator{\Lip}{Lip}
\DeclareMathOperator{\Per}{Per}
\newcommand{\Prob}{\mathscr{P}}
\newcommand{\Borel}{\mathscr{B}}
\newcommand{\dist}{\mathsf{d}}
\DeclareMathOperator{\sfd}{\dist}
\newcommand{\meas}{\mathfrak{m}}
\DeclareMathOperator{\CD}{CD}
\DeclareMathOperator{\RCD}{RCD}
\newfont{\tmpf}{cmsy10 scaled 2500}
\DeclareMathOperator*{\esssup}{ess\,sup}
\begin{document}

\title{\large \bf A reverse Hölder inequality for first eigenfunctions of the Dirichlet Laplacian on $\RCD(K,N)$ spaces}

\date{ }

\author{
\small  MUSTAFA ALPER GUNES\thanks{ Mathematical Institute, University of Oxford, UK. email: mustafa.gunes@st-hildas.ox.ac.uk} 
 \quad and \quad   \small ANDREA MONDINO \thanks{Mathematical Institute, University of Oxford, UK.  email: Andrea.Mondino@maths.ox.ac.uk}
}

\maketitle

\begin{abstract}

In the framework of  (possibly non-smooth) metric measure spaces with Ricci curvature bounded below by a positive constant in a synthetic sense, we establish a sharp and rigid reverse-H\"older inequality for first eigenfunctions of the Dirichlet Laplacian. This generalises to the positively curved and non-smooth setting the classical ``Chiti Comparison Theorem''. We also prove a related quantitative stability result  which seems to be new even for smooth Riemannian manifolds.
\end{abstract}

\section{Introduction}

The goal of the paper is to establish a reverse H\"older inequality for first eigenfunctions of the Dirichlet Laplacian in the framework on (possibly non-smooth) spaces satisfying a positive Ricci curvature lower bound in a synthetic sense. More precisely, the setting of the paper is given by the so-called $\mathrm{RCD}(K,N)$ metric measure spaces: the real parameter  $N\in(1,\infty)$ plays the role of (synthetic) upper bound  on the dimension, and the real parameter $K\in {\mathbb R}$ plays the role of (synthetic) lower bound on the Ricci curvature. We refer the reader to Section \ref{sec:prel} for a precise definition. For the sake of the introduction, let us just recall that a metric measure space is a triplet $(X,\sfd,\meas)$ where $(X,\sfd)$ is a complete separable metric space and $\meas$ is a Borel non-negative measure; for simplicity, we will assume that $\supp(\meas)=X$ and $\meas(X)=1$ throughout. Let us also mention that the class of $\mathrm{RCD}(K,N)$ metric measure spaces includes, as remarkable examples:
\begin{itemize}
\item (possibly weighted) Riemannian manifolds with (Bakry-\'Emery) Ricci curvature bounded below and their pointed-measured-Gromov-Hausdorff limits;
\item finite dimensional Alexandrov spaces with curvature bounded below.
\end{itemize}
When compared with the class of smooth Riemannian manifolds with Ricci curvature bounded from below, a remarkable advantage (that will also be used to establish some of the results of this note) of the class of $\mathrm{RCD}(K,N)$ metric measure spaces is that it is closed under natural operations which typically create singularities, such as taking pointed-measured-Gromov-Hausdorff limits, or taking quotients under groups of isometries.

On an $\mathrm{RCD}(K,N)$ metric measure space there is a naturally defined Laplacian operator (obtained via integration by parts of the so-called Cheeger energy, see section Section \ref{sec:prel} for more details).
\\ Given an open subset $\Omega\subset X$ of an $\mathrm{RCD}(K,N)$ space   $(X,\sfd,\meas)$, for some $K>0, \, N\in (1,\infty)$, we will consider solutions to the eigenvalue problem:

\begin{equation}
\begin{cases}
    -\Delta u=\lambda u \quad  \text{on }   \Omega \\
    \quad  \; \;u=0  \quad  \; \; \,  \text{on } \partial \Omega\, .
    \label{Laplacian}
    \end{cases}
\end{equation}

Our aim will be to draw comparison results between a solution $u$ of \eqref{Laplacian}, with a solution to a corresponding eigenvalue problem on the one-dimensional model metric measure space, $(I,\sfd_{eu},\meas_{K,N})$, with parameters $K>0$ and $N \in (1,\infty)$ given by:
\begin{equation}\label{eq:defIKN}
    \begin{cases}
    I_{K,N}= \left[0,\sqrt{\frac{N-1}{K}}\pi\right] \subset \R \\
   \meas_{K,N}=\frac{1}{c_{K,N}} \sin^{N-1}\left(t\sqrt{\frac{K}{N-1}}\right) \mathcal{L}^1(\di t) \\
    \sfd_{eu}(x,y)=|x-y|\, ,
    \end{cases}
\end{equation}
where $\mathcal{L}^1$ denotes the standard Lebesgue measure on $I_{K,N}$ and $c_{K,N}$ is a normalizing constant so that $\meas_{K,N}$ is a probability measure on $I_{K,N}$.
Notice that, in case $N\geq 2$ is integer, the model space $(I_{K,N}, \sfd_{eu}, \meas_{K,N})$ corresponds in a natural way to the $N$-dimensional round sphere of radius $\sqrt{\frac{N-1} {K}}$.
\\
We will be interested in the solution of the problem \eqref{Laplacian} for an open set $\Omega \subset X$ when $\lambda$ is the smallest positive real number such that a non-trivial solution exists, i.e. when $\lambda$ is the first Dirichlet eigenvalue, that we will denote by  $\lambda_X(\Omega)$. The following variational characterization of $\lambda_X(\Omega)$ and of the corresponding quantity in the model space is very useful:
\begin{definition}\label{def:VarCharlambda}
Let $(X,\sfd,\meas)$ be an $\mathrm{RCD}(K,N)$ space with $K>0, N \in (1,\infty)$ and let $\Omega \subset X$ be open. The first Dirichlet eigenavalue on $\Omega$ is given by
\begin{equation}\label{eq:1stEVRCD}
    \lambda_X(\Omega)=\inf_u \left\{\frac{\int_\Omega |\nabla u|^2 \di \meas}{\int_\Omega u^2 \di \meas}\right\}\, ,
\end{equation}
where the infimum is taken over all (non-identically vanishing) functions $u: \Omega\to \R$ that are Lipschitz with compact support on $\Omega$. Similarly, for $v\in (0,1)$, letting $r(v)$ be such that $\meas_{K,N}[0,r(v)]=v$, the first Dirichlet eigenvalue on the model space $([0,r(v)],\sfd_{eu},\meas_{K,N})$ is given by:
\begin{equation}
   \lambda_{K,N,v}=\inf_u \left\{\frac{\int_0^{r(v)} |u'|^2 \, \di\meas_{K,N}}{\int_0^{r(v) }u^2\,  \di\meas_{K,N}}\right\}\, ,
\end{equation}
where the infimum is taken over all  (non-identically vanishing) Lipschitz functions $u:[0,r(v)]\to [0,\infty)$ such that $u(r(v))=0$.
\end{definition}

\subsection{Main Theorems}
Let $(X,\sfd,\meas)$ be an $\mathrm{RCD}(K,N)$ space for some $K>0,\, N\in (1,\infty)$  and $\Omega\subset X$ be an open set such that $\meas(\Omega)=v$. It was proved in \cite[Theorem 1.5]{Polya} that:
\begin{equation*}
  \lambda_{K,N,v}\leq \lambda_X(\Omega).
\end{equation*}
From the variational characterization of the first eigenvalue \eqref{eq:1stEVRCD},  the fact that $\lambda_{K,N,t} \to \infty$ as $t\to 0$, and that $t \mapsto \lambda_{K,N,t}$ is a continuous function, it follows that there exists $\alpha \in (0,v]$ such that  $\lambda_{K,N,\alpha}=\lambda_X(\Omega)$.  Furthermore (see for instance \cite[Theorem 5.6]{Polya}) the eigenfunction on $([0,r(\alpha)],\sfd_{eu},\meas_{K,N})$ corresponding to the first eigenvalue $\lambda_{K,N,\alpha}$, which we will denote by $z(x)$ throughout the rest of the paper, is unique up to multiplication by a constant.\\
For simplicity of notation, the results will be stated in the case $K=N-1$ and $\meas(X)=1$.  This does not compromises generality, indeed every $\mathrm{RCD}(K,N)$ for some $K>0$ is compact with finite measure and can be scaled to become an $\mathrm{RCD}(N-1,N)$ space endowed with a probability measure. More precisely,  it holds  that $(X,\sfd,\meas)$ is an $\mathrm{RCD}(K,N)$ space with $K>0$, $N\in (1,\infty)$ if and only if, letting $\Tilde{\sfd}=\sqrt{\frac{N-1}{K}}\sfd$ and $\Tilde{\meas}= \meas(X)^{-1}\, \meas$,  the scaled and normalised metric measure space $(X,\Tilde{\sfd},\tilde{\meas})$ satisfies the $\mathrm{RCD}(N-1,N)$ condition.
\\ We are finally in position to state the first main result of the note. 

\begin{theorem}[A reverse H\"older inequality for first Dirichlet eingenfunctions]
Let $(X,\sfd,\meas)$ be an $\mathrm{RCD}(N-1,N)$ space, let $\Omega \subset X$ be an open set such that $\meas(\Omega)=v \in (0,1)$. Let $\alpha\leq v$ be such that $\lambda_{N-1,N,\alpha}=\lambda_X(\Omega)$. Let $u$ be a first Dirichlet eigenfunction on $\Omega$. Let $p>0$ and let $z:[0,r(\alpha)]\to \R$ be the eigenfunction corresponding to $\lambda_{N-1,N,\alpha}$, scaled so that:
\begin{equation}
  \int_{\Omega}u^p \di \meas=\int_0^{r(\alpha)} z^p \di \meas_{N-1,N}\, .
  \label{eq:hölderintegrals}  
\end{equation}
Then, for all $q\geq p$, it holds that:
\begin{equation}
    \frac{\left(\int_{\Omega}u^q \di \meas\right)^{\frac{1}{q}}}{\left(\int_{\Omega}u^p \di \meas\right)^{\frac{1}{p}}} \leq \frac{\left(\int_{0}^{r(\alpha)}z^q \di \meas_{N-1,N}\right)^{\frac{1}{q}}}{\left(\int_{0}^{r(\alpha)}z^p \di \meas_{N-1,N}\right)^{\frac{1}{p}}} \,. 
    \label{eq:holderineq}
\end{equation}
Further, if the equality holds for some $q>p$ then $(X,\sfd,\meas)$ is isomorphic to a spherical suspension.
\label{mainthm}
\end{theorem}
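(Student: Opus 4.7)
The plan is to adapt Chiti's classical rearrangement argument to the $\RCD(N-1,N)$ setting. Let $\phi:[0,v]\to[0,\infty)$ be the decreasing rearrangement of $u$ (with respect to $\meas$) and $\psi:[0,\alpha]\to[0,\infty)$ the decreasing rearrangement of $z$ (with respect to $\meas_{N-1,N}$), extended by zero on $[\alpha,v]$. The core statement is a ``Chiti crossing'': there exists $s_0\in[0,v]$ such that $\phi\leq\psi$ on $[0,s_0]$ and $\phi\geq\psi$ on $[s_0,v]$. Granted this, the reverse H\"older inequality follows from an elementary moment-monotonicity lemma, and the rigidity follows by tracing the chain of equalities back to the rigidity of L\'evy--Gromov on $\RCD(N-1,N)$.

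To derive the crossing, I would first obtain a differential inequality for $\phi$. Writing $\mu(t)=\meas(\{u>t\})$ and testing $-\Delta u=\lambda u$ against $\mathbf{1}_{\{u>t\}}$ gives
\begin{equation*}
\lambda\int_{\{u>t\}}u\,\di\meas=\int_{\{u=t\}}|\nabla u|\,\di\Per.
\end{equation*}
Combining this with the Cauchy--Schwarz inequality along the level set and the coarea formula $-\mu'(t)=\int_{\{u=t\}}|\nabla u|^{-1}\,\di\Per$ (both available on $\RCD$ spaces by the recent BV calculus) produces $\Per(\{u>t\})^2\leq\lambda(-\mu'(t))\int_{\{u>t\}}u\,\di\meas$. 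Applying the L\'evy--Gromov isoperimetric inequality $\Per(\{u>t\})\geq \mathcal{I}(\mu(t))$ of Cavalletti--Mondino, where $\mathcal{I}$ is the isoperimetric profile of the one-dimensional model, using the layer-cake identity $\int_{\{u>t\}}u\,\di\meas=\int_0^{\mu(t)}\phi$, and changing variables $s=\mu(t)$, one obtains the non-local inequality
\begin{equation*}
-\phi'(s)\leq \frac{\lambda}{\mathcal{I}(s)^{2}}\int_0^s\phi\qquad \text{for a.e. } s\in(0,v).
\end{equation*}
Running the same computation for $z$ on the one-dimensional model gives the \emph{equality} $-\psi'(s)=\frac{\lambda}{\mathcal{I}(s)^{2}}\int_0^s\psi$ on $(0,\alpha)$, since level sets of $z$ saturate L\'evy--Gromov and $|\nabla z|$ is constant on each level set.

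With these inequalities in hand, a Sturm-type argument applied to $h=\phi-\psi$, or equivalently to the primitives $B(s)=\int_0^s\phi$ and $C(s)=\int_0^s\psi$ (which satisfy $\mathcal{I}^2 B''+\lambda B\geq 0$ and $\mathcal{I}^2 C''+\lambda C=0$ with $B(0)=C(0)=0$), shows that $\phi$ and $\psi$ cross at most once on $[0,\alpha]$. The normalisation $\int_0^v\phi^p=\int_0^\alpha\psi^p$ rules out the no-crossing case and fixes the orientation. Granted the crossing, the reverse H\"older inequality reduces to the following elementary moment lemma, which I would prove by applying the mean value theorem to $x\mapsto x^{q/p}$ (with $q/p\geq 1$), using monotonicity of $f,g$ on either side of the crossing point, and integrating: if $f,g$ are non-negative decreasing functions on $[0,R]$ with $\int f^p=\int g^p$, $f\leq g$ on $[0,s_0]$ and $f\geq g$ on $[s_0,R]$, then $\int f^q\leq\int g^q$ for every $q\geq p$. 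Concretely, one shows pointwise $f^q-g^q\leq (q/p)\,c^{q/p-1}(f^p-g^p)$ with $c=f(s_0)^p=g(s_0)^p$, and the conclusion follows by integrating.

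For the rigidity, equality in \eqref{eq:holderineq} for some $q>p$ forces equality in the moment lemma, hence $\phi\equiv\psi$ and in particular $\alpha=v$. Retracing the chain, each intermediate inequality becomes an equality, so $\meas$-a.e. superlevel set $\{u>t\}$ saturates the L\'evy--Gromov inequality; the Cavalletti--Mondino rigidity theorem then forces $(X,\sfd,\meas)$ to be a spherical suspension. The main technical obstacle is the ``one-crossing'' claim: the inequality for $\phi$ is \emph{non-local}, coupling $\phi'(s)$ to the integral $\int_0^s\phi$, so the crossing property cannot be read off from a pointwise ODE comparison and requires a careful Sturm-type argument exploiting both the monotonicity of the non-local term and the exact ODE for $\psi$. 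A secondary, but by now standard, technical point is the rigorous interpretation of $|\nabla u|$, $\Per$, and the coarea formula on an $\RCD(N-1,N)$ space.
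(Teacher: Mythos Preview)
Your overall architecture coincides with the paper's: derive the Talenti-type differential inequality
$-\phi'(s)\le \lambda\,\mathcal{I}(s)^{-2}\int_0^s\phi$ (and equality for $\psi$), establish a single crossing of $\phi$ and $\psi$ under the $L^p$-normalisation, and then pass from the crossing to the $L^q$-comparison. The paper cites \cite{Talenti} for the first step, so your sketch of that derivation is consistent with what is actually used.

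The one place where your proposal is underspecified is precisely the point you flag as the main obstacle: the one-crossing claim. Your suggestion to run a Sturm comparison on the primitives $B,C$ does yield that the Wronskian $\phi C-\psi B$ is monotone and that $B/C$ is non-decreasing, but this by itself does not prevent $\phi-\psi$ from changing sign twice; the inequality is one-sided and concerns $B$, not $B'$. The paper does \emph{not} use a Sturm argument here. Instead it proceeds variationally in two steps: first (Lemma~\ref{chitilemma}) it shows that if one rescales so that $\psi(0)=\phi(0)$ then $\psi\le\phi$ everywhere, by assuming $a=\max(\psi/\phi)>1$, splicing $a\phi$ and $\psi$ at the first contact point into a competitor $w^\#$, checking that $w^\#$ still satisfies the differential inequality, and concluding that the associated $w$ achieves the Rayleigh quotient $\lambda$ and hence is a multiple of $z$, forcing $a=1$. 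Second (Theorem~\ref{ChitiThm}), under the $L^p$-normalisation it deduces $\psi(0)\ge\phi(0)$ from Lemma~\ref{chitilemma}, identifies the first crossing $s_1$, and rules out a second crossing $s_2$ by the same splice-and-Rayleigh trick. This test-function route is the standard Chiti mechanism; your write-up should replace the vague Sturm appeal by it.

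On the other hand, two of your steps are genuine simplifications relative to the paper. For the moment inequality, the paper invokes the Hardy--Littlewood--P\'olya majorisation lemma (Lemma~\ref{hardylemma}) with the convex function $x\mapsto x^{q/p}$, after first checking the hypothesis $\int_0^s (u^\#)^p\le \int_0^s (z^\#)^p$; your direct pointwise bound $f^q-g^q\le (q/p)\,c^{q/p-1}(f^p-g^p)$ with $c=f(s_0)^p=g(s_0)^p$ is shorter and self-contained. For the rigidity, the paper argues that equality at one $q>p$ forces equality for all $p'\in(p,q)$ (via HLP with $-x^{p'/q}$) and then appeals to a result of Klun \cite{Klun} on functions with coincident $L^{p'}$-norms to conclude $u^\#=z^\#$; your route---equality in the pointwise moment bound immediately forces $\phi\equiv\psi$, hence $\alpha=v$---avoids that external reference entirely. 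Once $\alpha=v$, both arguments finish via the rigidity in the $\RCD$ Faber--Krahn theorem \cite[Theorem 1.9]{Polya}, which is indeed underpinned by the L\'evy--Gromov rigidity you cite.
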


The second main result is a  stable version of Theorem \ref{mainthm}, when $p=1$.
\begin{theorem}[Quantitative stability in the reverse Hölder inequality]\label{thm:ChitiStability} Let $N\in (1,\infty)$, $v\in (0,1)$ and $\lambda >0$ be given. Then there exists $C=C(N,v,\lambda)>0$ and $\tilde{\delta}=\tilde{\delta}(N,v,\lambda)>0$ such that the following holds.

Let $(X,\sfd,\meas)$ be an $\mathrm{RCD}(N-1,N)$ space. Let $\Omega \subset X$ be an open domain with $\meas\left(\Omega\right)=v\in (0,1)$ and $\lambda_{X}(\Omega)=\lambda$.  Let $u$ and $z$ be as in the assumptions of Theorem \ref{mainthm}, scaled so that:
\begin{equation*}
    \int_\Omega u \di \meas=\int_0^{r(\alpha)}z \di \meas_{N-1,N}=1\, .
\end{equation*}
If there exist $\delta\in (0, \tilde{\delta})$ and an unbounded subset $Q\subset (0,\infty)$ such that 
\begin{equation}  \label{eq:stabilityineq}
  \|z\|_{L^q\left((0,r(\alpha)),\meas_{N-1,N}\right)}-\|u\|_{L^q\left(\Omega,\meas\right)}<\delta \,, \quad \text{for all $q\in Q$ , }
\end{equation}
then
\begin{equation*}
    \left(\pi-{\rm{diam}}(X)\right)^N \leq C\sqrt{\delta}\, .
\end{equation*}
In particular, for all $\eps>0$ there exists $\delta=\delta(N,v,\lambda,\eps)>0$ such that if \eqref{eq:stabilityineq} holds  then there exists a spherical suspension $(Z,\sfd_Z,\meas_Z)$ such that:
\begin{equation*}
    \sfd_{mGH}\left((X,\sfd,\meas),(Z,\sfd_Z,\meas_Z)\right)<\eps\, ,
\end{equation*}
where $\sfd_{mGH}$ is the measured-Gromov-Hausdorff distance between two metric measure spaces.
\end{theorem}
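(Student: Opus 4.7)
The plan is to quantitatively retrace the proof of Theorem \ref{mainthm}, turning the equality-case rigidity into an explicit stability estimate, and then invoke a quantitative form of the L\'evy--Gromov isoperimetric inequality for $\RCD(N-1,N)$ spaces to bound $\pi - {\rm diam}(X)$; the mGH statement is a compactness corollary.

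I expect the proof of Theorem \ref{mainthm} to proceed via the distribution functions $\mu(t) := \meas(\{u > t\})$ and $\nu(t) := \meas_{N-1,N}(\{z > t\})$: the L\'evy--Gromov inequality on $\RCD(N-1,N)$ spaces (Cavalletti--Mondino), the co-area formula, and the eigenvalue equation $-\Delta u = \lambda u$ combine into a differential inequality for $\mu$ which, compared with the corresponding ODE for $\nu$, yields the pointwise comparison $\mu \leq \nu$; the reverse H\"older inequality \eqref{eq:holderineq} is then a consequence of layer-cake.

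To prove Theorem \ref{thm:ChitiStability}, I would first convert \eqref{eq:stabilityineq} into information on $\nu - \mu$. Layer-cake gives $\|u\|_{L^q}^q = q \int_0^\infty t^{q-1}\mu(t)\di t$ and similarly for $z$, so using also $\mu \leq \nu$ the hypothesis implies
\[
\int_0^\infty t^{q-1}\bigl(\nu(t) - \mu(t)\bigr)\di t \leq C(q)\,\delta, \qquad q \in Q.
\]
Since $Q$ is unbounded, this forces $\|z\|_\infty - \|u\|_\infty \leq \delta$ and, combining the estimates across different $q \in Q$ via a Chebyshev / interpolation argument, upgrades to pointwise closeness of $\nu$ and $\mu$ on a non-trivial interval of levels. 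Consequently, the differential inequality for $\mu$ must be nearly saturated at some level $t_0$, which translates into an isoperimetric deficit of order $\sqrt\delta$ for the super-level set $\{u > t_0\}$, compared against the L\'evy--Gromov optimal perimeter at volume $\meas(\{u > t_0\})$.

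At this point I would invoke a quantitative almost-rigidity statement for the L\'evy--Gromov inequality in the $\RCD(N-1,N)$ class: an isoperimetric deficit of size $\eta$ at non-trivial volume forces $(\pi - {\rm diam}(X))^N \lesssim \eta$, with the exponent $N$ coming from the scaling of volume near the endpoint in the one-dimensional model. Combined with the previous step, this yields the announced $(\pi - {\rm diam}(X))^N \leq C\sqrt{\delta}$. The mGH corollary then follows by Ketterer's maximal-diameter theorem (diameter equal to $\pi$ forces a spherical suspension) and the compactness of $\RCD(N-1,N)$ under mGH-convergence: if no $\delta(\eps)$ worked, one could extract a subsequence with diameter tending to $\pi$ yet staying $\eps$-far from every spherical suspension, contradicting that any subsequential mGH-limit is a spherical suspension. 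The main obstacle I anticipate is the passage from integrated $L^q$ information to single-level-set information with an explicit rate: since $Q$ is only assumed unbounded (neither containing $\infty$ nor a fixed value), the interpolation producing pointwise control on $\nu - \mu$ is delicate and will likely determine both the exponent $1/2$ on $\delta$ and the dependence of $C$ on $N, v, \lambda$.
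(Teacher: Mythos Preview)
Your high-level architecture matches the paper's exactly: deduce $\|z\|_\infty - \|u\|_\infty \le \delta$ from the unbounded $Q$, locate a single super-level set of $u$ with isoperimetric deficit of order $\sqrt\delta$, apply the quantitative L\'evy--Gromov/B\'erard--Besson--Gallot estimate for $\RCD(N-1,N)$ spaces to convert this into a bound on $(\pi-\mathrm{diam}(X))^N$, and finish the mGH statement by compactness together with Ketterer's maximal diameter theorem. That is precisely the paper's route.

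The gap is in the mechanism you propose for the first two steps. The pointwise comparison $\mu \le \nu$ is \emph{false}: since the model eigenfunction $z$ lives on an interval of measure $\alpha$ with (generically) $\alpha < v = \meas(\Omega)$, one has $\mu(0^+)=v>\alpha=\nu(0^+)$. What actually holds (Theorem~\ref{ChitiThm}) is a \emph{crossing} property for the rearrangements, $u^\#\le z^\#$ on $[0,s_1]$ and $u^\#\ge z^\#$ on $[s_1,v]$, equivalently only the integral majorization $\int_0^s u^\#\le\int_0^s z^\#$ for all $s$ (equation \eqref{eq:intuleqintz} with $p=1$), not a pointwise one. Consequently the layer-cake reduction $\int t^{q-1}(\nu-\mu)\,\di t\ge 0$ is not available, and the ``Chebyshev/interpolation'' step as you describe it does not go through.

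The paper replaces this with an explicit argument on the rearrangements: the differential inequality \eqref{uestimate} and the equality \eqref{zestimate}, together with \eqref{eq:intuleqintz}, show that $s\mapsto z^\#(s)-u^\#(s)$ is non-increasing. Combining this monotonicity with the $L^1$ normalization and $z^\#(0)-u^\#(0)<\delta$ gives
\[
\int_{\alpha}^{\alpha+\sqrt\delta} u^\#(s)\,\di s \;\le\; \int_{s_1}^{v}\bigl(u^\#-z^\#\bigr)\,\di s \;=\; \int_{0}^{s_1}\bigl(z^\#-u^\#\bigr)\,\di s \;\le\; v\,\delta,
\]
so by the mean value theorem there exists $y\in(\alpha,\alpha+\sqrt\delta)$ with $u^\#(y)\le v\sqrt\delta$. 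One then integrates the coarea/Talenti inequality $\Per(\{u>t\})^2\le -\lambda\,\mu_u'(t)\int_0^{\mu_u(t)}u^\#$ over $t\in[u^\#(y),u^\#(0)]$ and applies the mean value theorem once more to produce a level $t_0$ at which $\Per(\{u>t_0\})^2/\mathcal{I}_{N-1,N}(\mu_u(t_0))^2\le 1+C\sqrt\delta$. This, not an interpolation argument, is where the exponent $1/2$ on $\delta$ originates. If you replace your $\mu\le\nu$ step by this monotonicity-of-$z^\#-u^\#$ argument, the rest of your outline is correct and coincides with the paper's proof.
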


\subsubsection*{Related literature}
The roots of the results presented in this note lie in a paper of 1972 by Payne and Rayner\footnote{A curiosity: Dr. Margaret Rayner has been a tutorial fellow in Mathematics at St. Hilda's College-Oxford (the college of the authors of the present note) from the 1960'ies up to her retirement in 1989.}  \cite{PaRa}, establishing the following reverse H\"older inequality for an eigenfunction $u$ of the Dirichlet Laplacian relative to the first eigenvalue $\lambda_{1}(\Omega)$ for a bounded planar domain $\Omega\subset \R^{2}$:
\begin{equation*}
\frac{\|u\|_{L^{2}(\Omega)}} {\|u\|_{L^{1}(\Omega)}} \leq \frac{\sqrt{\lambda_{1}(\Omega)}}{2\sqrt{\pi}}\, .
\end{equation*}
The inequality is sharp and rigid: indeed, equality occurs if and only if $\Omega$ is a disk. The inequality was generalised   to higher dimensional domains in 1973 by Payne and Rayner \cite{PaRa2} and in 1981 by Kohler-Jobin \cite{KoJo}, who proved an isoperimetric-type comparison between the $L^{2}$ and the $L^{1}$ norms of a first eigenfunction of the Dirichlet Laplacian for bounded domains in $\R^{n}$, $n\geq 3$. The inequality was generalised to arbitrary exponents $q\geq p>0$ for bounded domains in $\R^{n}$, $n\geq 2$, by Chiti \cite{Chiti} in 1982. Chiti's  Comparison Theorem was then extended to:
\begin{itemize}
\item domains in the hemisphere, by Ashbaugh-Benguria \cite{AshBen};
\item bounded domains in the hyperbolic space by Benguria-Linde \cite{BeLin};
\item smooth compact Riemannian manifolds with positive Ricci curvature by  Gamara-Hasnaoui-Makni \cite{Gamara} and Colladay-Langford-McDonald  \cite{Colladay};
\item smooth compact Riemannian manifold with an integral Ricci curvature bound by Chen \cite{Chen};
\item smooth Riemannian manifolds with non-negative Ricci curvature and positive asymptotic volume ratio by Chen-Li \cite{ChenLi}.
\end{itemize}
 Let us mention that all the aforementioned papers deal with \emph{smooth} ambient spaces and this note seems to be the first extension of Chiti's comparison result to a \emph{non-smooth} framework.
 \\Moreover, the stable version of Chiti's comparison obtained in Theorem \ref{thm:ChitiStability} seems to be new even in the smooth setting, i.e.  when  the metric measure space $(X,\sfd,\meas)$ is a smooth Riemannian manifold.

\subsubsection*{Acknowledgements}
The authors are  supported by the European Research Council (ERC), under the European's Union Horizon 2020 research and innovation programme, via the ERC Starting Grant  “CURVATURE”, grant agreement No. 802689.
 The authors are grateful to the anonymous reviewer, whose careful reading and comments improved the exposition of the manuscript.

\section{Preliminaries}\label{sec:prel}
\subsection{Cheeger energy and isoperimetric profile}
To state everything mentioned in the previous section in a more precise way, we will need a series of definitions regarding $\mathrm{RCD}(K,N)$ spaces. Throughout this section, we will write $\Lip(X)$ for the set of real valued functions on $X$ that are Lipschitz, and $\Borel(X)$ to denote the set of Borel subsets of $X$. We begin with an assumption that will be made throughout the paper. 
\begin{assumption}
$(X,\sfd,\meas)$ will always denote a compact and separable metric measure space. Indeed, via a generalization of Bonnet-Myers theorem to $\mathrm{CD}(K,N)$ spaces, having $K>0$ guarantees that $(X,\sfd)$ is in fact compact, and further that $\meas(X)<\infty$ so that we may assume without loss of generality, which we henceforth do, that $\meas$ is a probability measure on $X$.
\end{assumption}

We begin by defining the notion of slope on $(X,\sfd,\meas)$:
\begin{definition}
Given a function $u: X\to \R$ we define its slope, denoted by $|\nabla u|$ to be:
\begin{equation}
    |\nabla u|(x_0) =\begin{cases}
    \limsup_{x\to x_0}\frac{|u(x)-u(x_0)|}{\sfd(x,x_0)} & \text{when $x_0$ is not isolated} \\
    0  & \text{when $x_0$ is isolated .}
    \end{cases}
\end{equation}
\end{definition}
\begin{definition}
Let $B\subset X$ be a Borel set and $A\subset X$ be an open set. Then we define the perimeter of $B$ with respect to $A$ as:
\begin{equation}
    \Per(B,A):= \inf_{\{u_n\}_{n\in \setN}} \left\{ \liminf_{n \to \infty} \int_A |\nabla u_n| \di \meas\right\}\, ,
\end{equation}
where the infimum is taken over sequences of functions $\{u_n\}_{n\in \setN}$ such that $u_n \in \Lip(X)$ for all $n \in \mathbb{N}$ and they converge, in $L^1(A,\meas)$, to $\mathcal{X}_A$.\\
For simplicity of notation, we will write $\Per(B):=\Per(B,X)$.
\end{definition}
\begin{definition}
The \textit{isoperimetric profile} corresponding to $(X,\sfd,\meas)$, denoted by $\mathcal{I}_{\meass}:[0,1] \to [0,\infty)$, is given as:
\begin{equation}
    \mathcal{I}_{\meass}(v):=\inf\left\{\Per(E):\meas(E)=v,\, E \in  \Borel(X)\right\}\, .
\end{equation}
We will denote by $\mathcal{I}_{K,N}$, the isoperimetric profile of the model space $\model$ defined in \eqref{eq:defIKN}.
\end{definition}
\begin{definition}
For $u\in L^p(X,\meas)$ and $1<p<\infty$ the $p$-\textit{Cheeger} energy is defined as:
\begin{equation}
    \Ch_p(u):=\inf\left\{\liminf_{n\to \infty} \frac{1}{p}\int_X |\nabla u_n|^p \di \meas:u_n \in \Lip(X)\cap L^p(X,\meas),  \lim_{n \to \infty} \|u_n-u\|_{L^p}=0 \right\}\, .
\end{equation}
We note that the Sobolev space $W^{1,p}\meass$, defined to be the set of functions with finite $p$-\textit{Cheeger} energy, is a Banach space when endowed with the norm:
\begin{equation*}
    \|u\|_{W^{1,p}\meass}:=\left\{\|u\|_{L^p (X,\meas)}^p+p\, \Ch_p(u)\right\}^{1/p}\, .
\end{equation*}
For $\Omega \subset X$, $W^{1,p}_0(\Omega)$ will denote the closure of the set Lipschitz functions with compact support on $\Omega$, with respect to this norm on $W^{1,p}\meass$.\\
We also define, for $u\in W^{1,p}\meass$, $|\nabla u|_w \in L^p(X,\meas)$ to be the function called \textit{minimal weak upper gradient} that gives the following representation for the $p$-\textit{Cheeger} energy (see \cite{AGSRevista}):
\begin{equation*}
    \Ch_p(u)=\frac{1}{p}\int_X |\nabla u|_w^p \, \di \meas\, .
\end{equation*}
\end{definition}
\subsection{Geodesics, $\CD(K,N)$ and $\RCD(K,N)$ conditions}
We let $\Geo(X)$ be the space of geodesics with constant speed. More precisely, we define:
\begin{equation}
    \Geo(X):=\{\gamma :[0,1]\to X \mid \sfd(\gamma(\alpha),\gamma(\beta))=|\alpha-\beta|\sfd(\gamma(0),\gamma(1)), \text{ for all } \alpha,\beta \in [0,1]\}\, .
\end{equation}
For an arbitrary metric space $(Z,\sfd_Z)$, we will write $\Prob(Z)$ to  denote the set of Borel probability measures on $Z$.
For $\mu_1, \mu_2 \in \Prob(Z)$,  let $W_2(\mu_1, \mu_2)$ denote the quadratic transportation distance defined by
\begin{equation}
    W_2(\mu_1,\mu_2):= \inf_{\pi} \left( \int_{X\times X} \sfd^2(x,y)\, \di \pi(x,y) \right)^{1/2}\, ,
    \label{eq:W2distance}
\end{equation}
where the infimum is taken over all Borel probability measures $\pi$ on $Z\times Z$ such that first and second marginal distributions of $\pi$ are $\mu_1$ and $\mu_2$ respectively.
Recall that $(\Prob(Z),W_2)$ is a metric space.
\\

For fixed $t\in[0,1]$, we construct the evaluation map $e_t: \Geo(X)\to X$ as $ e_t(\gamma):=\gamma(t)$.
For $\mu_1,\mu_2 \in \Prob(X)$ we define the set $\Opt(\mu_1,\mu_2) \subset \Prob(\Geo(X))$ to be the set of measures $\nu$ such that $(e_0,e_1)_\sharp \nu$ minimizes the set in \eqref{eq:W2distance}, where we use the subscript $\sharp$ to denote the pushforward of a measure.\\
To be able to state the $\CD(K,N)$ condition we first need to introduce the so-called distortion coefficients. For $0\leq t\leq 1$ and $\theta \in [0,\infty)$, we define:
\begin{equation}
    \tau_{K,N}^{(t)}(\theta)=\begin{cases}
    \infty & \text{when } K\theta^2\geq(N-1)\pi^2\\
    t^{1/N} \left(\frac{\sin\left(t\theta\sqrt{\frac{K}{N-1}}\right)}{\sin\left(\theta\sqrt{\frac{K}{N-1}}\right)}\right)^{\frac{N-1}{N}} & \text{when } 0<K\theta^2<N\pi^2\\
    t^N  & \text{when } K\theta^2<0 \text{ and } N=1 \text{ or when } K\theta^2=0\\
    t^{1/N} \left(\frac{\sinh\left(t\theta\sqrt{\frac{-K}{N-1}}\right)}{\sinh\left(\theta\sqrt{\frac{-K}{N-1}}\right)}\right)^{\frac{N-1}{N}} & \text{when } K\theta^2\leq 0 \text{ and } N>0\, .
    \end{cases}
\end{equation}
We also introduce the \textit{Rényi Entropy functional} $\mathcal{E}:(1,\infty)\times \Prob(X)\to [0,\infty]$ as: 
\begin{equation}
    \mathcal{E}(N,\sigma):= \int_X \rho^{1-\frac{1}{N}} \di \meas\, ,
\end{equation}
where $\sigma=\rho\meas+\sigma^s$, $\sigma^s \perp\meas$.
We are finally in a position to recall the definition of the $\RCD(K,N)$ condition.  Let us briefly mention that the $\CD(K,N)$ condition was introduced independently by Sturm \cite{St1, St2} and Lott-Villani \cite{LottVillani}; the $\RCD$ condition was later introduced by Ambrosio-Gigli-Savar\'e \cite{AGS} in case $N=\infty$ (see also \cite{AGMR}) as a refinement in order to single out ``Riemannian structures'' out of the ``possibly Finslerian $\CD$ structures''.
The finite dimensional counterparts subsequently led to the notions of $\RCD(K,N)$ and $\RCD^*(K, N)$ spaces, as the ``Riemannian refinements'' of  $\CD(K, N)$  and  of the more general $\CD^*(K, N)$ respectively (see \cite{BacherSturm10} for the latter). The class $\RCD(K,N)$ was proposed in \cite{Gigli}. The (a priori more general) $\RCD^*(K,N)$ condition was thoroughly analysed in \cite{EKS} and (subsequently and independently) in \cite{AMS}; see also \cite{CavaMil} for the equivalence between $\RCD^*(K,N)$ and $\RCD(K,N)$ in the case of finite reference measure.

\begin{definition}[$\RCD (K,N)$ condition]
Fix $K\in \R$ and $1<N<\infty$. We say that $\meass$ is a $\CD(K,N)$ space if for all $\mu_0,\mu_1 \in \Prob(X)$ that have bounded support and are aboslutely continuous with respect to $\meas$ we can find $\nu \in \Opt(\mu_0,\mu_1)$ and  an optimal plan $\pi \in \Prob(X \times X)$ such that $\mu_t:=(e_t)_\sharp \nu$ is absolutely continuous with respect to $\meas$ and for all $0\leq t \leq 1$ and $N'\geq N$ it holds:
\begin{equation}
    \mathcal{E}(N',\mu_t)\geq \int \left\{\tau_{K,N'}^{(1-t)}(\sfd(x,y)) \rho_0^{-\frac{1}{N'}}+\tau_{K,N'}^{(t)}(\sfd(x,y)) \rho_1^{-\frac{1}{N'}}\right\} \di \pi(x,y)\, .
\end{equation}
We say that $\meass$ is an $\RCD(K,N)$ space if it is a $\CD(K,N)$ space and further $W^{1,2}\meass$ is a Hilbert space when endowed with an inner product corresponding to $\Ch_2$. In this case, we will write $\Ch(\cdot,\cdot)$ to denote this inner product. 
\end{definition}
Finally, we need to define what it means for a function $u\in W^{1,2}\meass$ to solve the problem \eqref{Laplacian} weakly:
\begin{definition}
    Let $\meass$ be an $\RCD(K,N)$ space for some $K\in (0,\infty),\,  N\in (1,\infty)$, and let $\Omega \subset X$ be an open domain. We say that $u\in W^{1,2}\meass$ is a weak solution of \eqref{Laplacian} if and only if $u\in W_0^{1,2}(\Omega)$ and for all $f\in W_0^{1,2}(\Omega)$ we have that:
    \begin{equation}
        \Ch(u,f)=\int_\Omega \lambda uf \, \di \meas
    \end{equation}
\end{definition}

\subsection{Schwartz symmetrization and the first Dirichlet eigenvalue}

The notion of Schwartz symmetrization recalled below will play a key role in the paper, as it makes an  essential connection between a function on an arbitrary $\mathrm{RCD}(K,N)$ space and a suitable function defined on $(I_{K,N},\sfd_{eu},\meas_{K,N})$, namely its Schwartz symmetrization.

\begin{definition}
 Let $(X,\sfd,\meas)$ be an $\mathrm{RCD}(K,N)$ metric measure space with $K>0$ and $N \in (1,\infty)$, and let $\Omega \subset X$ be an open subset of volume $\meas(\Omega)=v\in (0,1)$. Consider $(I_{K,N},\sfd_{eu},\meas_{K,N})$ defined in \eqref{eq:defIKN}  and let $r(v)>0$ be such that $\meas_{K,N}[0,r(v)]=v$. 
\\For a Borel function $u :\Omega \to \R$,  we define its distribution function $\mu_{u} : \mathbb{R}^+ \to [0,\meas(\Omega)]$ by:
\begin{equation*}
    \mu_u (t) :=\meas\left(\{|u|>t\}\right) \,.
\end{equation*}
  The decreasing rearrangement $u^\#: [0,\meas(\Omega)] \to [0,\infty]$ of $u$ is defined as:
\begin{equation}
    u^\#(s):=\begin{cases}
    \esssup |u| & \text{for } s=0\, , \\
    \inf \{t: \mu_u(t)<s\} & \text{for } s>0\, .
    \end{cases}
\end{equation}
Finally, for $x\in [0,\meas(\Omega)]$, we define the Schwartz symmetrization $u^*$ of $u$ as:
\begin{equation*}
    u^*(x)=u^\#\left(\meas_{K,N}[0,x]\right)\, .
\end{equation*}
 It is immediate to check that $u$ and $u^*$ admit the same distribution function (see for instance \cite[Proposition 3.5]{Polya}); moreover, if $u\in L^p(\Omega,\meas)$ for some $p\in[1,\infty]$ then $u^*\in L^p([0,r(v)]),\meas_{K,N})$ and we have the equality:
\begin{equation}
    \|u\|_{L^p(\Omega,\meas)}=\|u^*\|_{L^p([0,r(v)],\meas_{K,N})}=\|u^\#\|_{L^p([0,v],\mathcal{L}^1)}\, .
\end{equation}
\end{definition}
\begin{remark}
It is trivial to verify that $\mu_u, u^\#$ and $u^*$ are all non-increasing and left continuous.
\end{remark}

With these tools in hand, we will mainly be interested in the solution of the problem \eqref{Laplacian} for an open set $\Omega \subset X$ when $\lambda$ is the smallest positive real number such that a solution exists. We will denote such a $\lambda$ as  $\lambda_X(\Omega)$. Indeed, it is a well-known fact that the first Dirichlet eigenvalue $\lambda_X(\Omega)$ in fact has the variational characterization recalled in Definition \ref{def:VarCharlambda}.

Next, to be able to talk about rigidity and almost rigidity in our results, we need to define what it means for a metric measure space $\meass$ to be a spherical suspension:
\begin{definition}
    Let $(Y,\sfd_Y,\meas_Y)$ and $(Z,\sfd_Z,\meas_Z)$ be geodesic metric measure spaces and let $f:Y\to [0,\infty)$ be Lipschitz. Let $\mathrm{AC}([0,1];Y\times Z)$ denote the set of absolutely continuous curves $\gamma=(\gamma_1,\gamma_2):[0,1]\to Y\times Z$. Let:
    \begin{equation*}
        L_{f}(\gamma):= \int_0^1 \left(|\gamma'_1|^2(t)+\left(f(\gamma_1(t))\right)^2|\gamma'_2|^2(t)\right)^{1/2}\di t\,.
    \end{equation*}
    Define the distance $d$ on $Y\times Z$ as:
    \begin{equation*}
        \sfd_{f}\left((a,b),(c,d)\right):= \inf \left\{L_{f}(\gamma): \gamma\in \mathrm{AC}([0,1];Y\times Z), \gamma(0)=(a,b), \gamma(1)=(c,d)\right\}\, .
    \end{equation*}
    Then, under the equivalence relation $(a,b)\sim(c,d) \iff \sfd_{f}\left((a,b),(c,d)\right)=0$, we obtain a metric space:
    \begin{equation*}
        (Y\times Z/\sim,\sfd_{f}).
    \end{equation*}
    Furthermore, for $N\in[1,\infty)$, endowing this metric space with the measure $\meas_N:= f^N\meas_Y \otimes\meas_Z$, we obtain the metric measure space:
    \begin{equation*}
       Y\times_{f}^N Z:=  (Y\times Z/\sim,\sfd_{f},\meas_N).
    \end{equation*}
   We say that the metric measure space $(X,\sfd,\meas)$  is a spherical suspension over  $\left(Y,\sfd_Y,\meas_Y\right)$  if  $(X,\sfd,\meas)$  is isomorphic to $[0,\pi]\times_{\sin}^{N-1}Y$.
 \end{definition}
  
  It was proved by Ketterer \cite{Kett} that  $[0,\pi]\times_{\sin}^{N-1}Y$ is an  $\RCD(N-1, N)$ space if and only if $\left(Y,\sfd_Y,\meas_Y\right)$ is an $\RCD(N-2, N-1)$ space.

\section{Proofs of the results}

Throughout this section, unless stated otherwise, we will let $(X,\sfd,\meas)$ be an $\mathrm{RCD}(N-1,N)$ space with $\supp(\meas)=X$ and $\meas(X)=1$, $\Omega \subset X$ be an open set such that $\meas(\Omega)=v \in (0,1)$, $\alpha\leq v$ be such that $\lambda_{N-1,N,\alpha}=\lambda_X(\Omega)$, $u$ be a first Dirichlet eigenfunction on $\Omega$ and $z:[0,r(\alpha)]\to \R$ be an eigenfunction corresponding to $\lambda_{N-1,N,\alpha}$. As already recalled in the introduction, such a function $z:[0,r(\alpha)]\to \R$ is uniquely determined, up to scaling by a real constant. 
We begin with the following lemma:
\begin{lemma}\label{lem:zu*}
Let  $z:[0,r(\alpha)]\to \R$ as above be scaled so that
\begin{equation}
    z(0)=u^{*}(0) \, .
\end{equation}
Then:
\begin{itemize}
\item either $\alpha=v$ and $z(s)=u^*(s)$ for every $s\in [0, r(v)]$;
\item  or $\alpha<v$ and  $z(s)\leq u^*(s)$ for every $s\in (0,r(\alpha)]$.
\end{itemize}
\begin{proof}
If $\alpha=v$, we have $\lambda_X(\Omega)=\lambda_{N-1,N,v}$. From the rigidity in \cite[Theorem 1.9]{Polya} we know that $X$ is isomorphic to a spherical suspension and $z(s)=u^*(s)$ for every $s\in [0, r(v)]$. 

Now we turn to the case $\alpha<v$. By the proof of \cite[Theorem 3.10]{Talenti}, one can see that for any $0\leq t'<t\leq \esssup u$ it holds:
\begin{equation*}
    t-t'\leq \int_{\mu(t)}^{\mu(t')}\frac{\lambda_{N-1,N,\alpha}}{\mathcal{I}_{N-1,N}(\xi)^2} \left(\int_0^{\xi} u^{\#}(t) \di t\right) \di \xi\, .
\end{equation*}
Hence, letting $0\leq s<s'\leq \mu(0)$, setting $t=u^{\#}(s)-\epsilon$, $t'=u^{\#}(s')$ for $\epsilon >0$ and then taking $\epsilon \to 0$ (noting that by manipulating equi-measurability, $\mu (u^{\#}(s'))\leq s'$ and $\mu (u^{\#}(s)-\epsilon)\geq s$):
\begin{equation}
 u^{\#}(s)-u^{\#}(s')\leq \int_{s}^{s'}\frac{\lambda_{N-1,N,\alpha}}{\mathcal{I}_{N-1,N}(\xi)^2} \left(\int_0^{\xi} u^{\#}(t) \di t\right) \di \xi.
 \label{eq:bounddiff}
\end{equation}
By \cite[Theorem 3.10]{Talenti} we can see that the right hand-side of \eqref{eq:bounddiff} is in $L^1[0,t)$ for every $t\in(0,v)$ so that we may conclude that $u^\#$ is absolutely continuous on $[0,v)$ and thus almost everywhere differentiable on $(0,v)$. Hence, by \eqref{eq:bounddiff} we have that the following estimate holds for  almost every  $s \in (0,v)$:
\begin{equation}
    -\frac{d}{ds}u^{\#}(s)\leq \frac{\lambda_{N-1,N,\alpha}}{\mathcal{I}_{N-1,N}(s)^2}\int_0^{s} u^{\#}(t) \di t\, .
    \label{uestimate}
\end{equation}
Further, by \cite[Proposition 2.7]{Talenti} we have the following equality,
\begin{equation}
    -\frac{d}{ds}z^{\#}(s)= \frac{\lambda_{N-1,N,\alpha}}{\mathcal{I}_{N-1,N}(s)^2}\int_0^{s} z^{\#}(t) \di t\, , \quad \text{for every $s\in (0,\alpha)$.}
    \label{zestimate}
\end{equation}
 We claim that  $u^{\#}(\alpha)>0$. Indeed,  if by contradiction $u^{\#}(\alpha)=0$, it would follow that $\meas\left(\{u=0\}\right)=v-\alpha>0$ which gives a contradiction since $u$ is an eigenfunction on $\Omega$ and thus needs to be strictly positive (see \cite[Corollary 5.7]{Latvala}). Now, from the above estimates we know that $u^\#$ and $z^\#$ are continuous functions on $[0,\alpha]$, with $u^\#>0$ on   $[0,\alpha]$. Thus, $\frac{z^{\#}(s)}{u^{\#}(s)}$ is also a continuous function on $[0,\alpha]$ and we can set:
\begin{equation*}
    a:= \max_{s\in [0,\alpha]} \frac{z^{\#}(s)}{u^{\#}(s)}.
\end{equation*}
Assume by contradiction that $a>1$, and set:
\begin{equation}
    s_0:= \inf\{ s\in (0, \alpha] : a u^\#(s)=z^\#(s)\}.
\end{equation}
Clearly it holds that $s_0>0$, since $u^\#(0)=z^\#(0)$ and both functions are right-continuous at $0$. Next, define:
\begin{equation}
    w^{\#}(s):=
  \begin{cases}
     a u^{\#}(s), & \text{for } 0 \leq s \leq s_0 \\
    z^{\#}(s), & \text{for } s_0 \leq s \leq \alpha \,.
  \end{cases}
\end{equation}
Letting $w(x):=w^{\#}(\meas_{N-1,N}[0,x])$, we can see that:
\begin{align*}
    \int_0^{r(\alpha)} \left(w'\right)^2 \di \meas_{N-1,N}=\int_0^{r(\alpha)} \sin^{2N-2}(x)\left(w^{\# '}\left(\meas_{N-1,N}[0,x]\right)\right)^2 \di \meas_{N-1,N}\\=
    \int_0^{r(\alpha)} \left(w^{\# '}\left(\meas_{N-1,N}[0,x]\right)\right)^2 \mathcal{I}_{N-1,N}(\meas_{N-1,N}[0,x])^2 \di \meas_{N-1,N}\, .
\end{align*}
By \eqref{uestimate} and \eqref{zestimate}, we have:
\begin{equation}
     -\frac{d}{ds}w^{\#}(s)\leq \frac{\lambda_{N-1,N,\alpha}}{\mathcal{I}_{N-1,N}(s)^2}\int_0^{s} w^{\#}(t)\, \di t, \quad \text{for a.e.  $s\in (0,s_0)\cup(s_0,\alpha)$ ,}
    \label{eq:vestimate}
\end{equation}
and hence:
\begin{align*}
     \int_0^{r(\alpha)} \left(w'\right)^2 \di \meas_{N-1,N} &\leq \lambda_{N-1,N,\alpha}\int_0^{r(\alpha)}(-w^{\# '}(\meas_{N-1,N}[0,x]))\left(\int_0^{\meas_{N-1,N}[0,x]}w^{\#}(\zeta)d\zeta\right)  \di \meas_{N-1,N} \\
&=\lambda_{N-1,N,\alpha} \int_0^{r(\alpha)}\left(w^{\#}\left(\meas_{N-1,N}[0,x]\right)\right)^2 \di \meas_{N-1,N} \nonumber \\
&= \lambda_{N-1,N,\alpha} \int_0^{r(\alpha)} w^2 \di \meas_{N-1,N}\, , \nonumber
\end{align*}
where we have used the estimate \eqref{eq:vestimate} and the fact $w^{\# '}\leq 0$ for the inequality, and integrated by parts for the first equality noting that we have $w^{\#}(\alpha)=0$.
Thus, we get the following inequality:
\begin{equation}
    \lambda_{N-1,N,\alpha}\geq \frac{ \int_0^{r(\alpha)} (w')^2 \di \meas_{N-1,N}}{\int_0^{r(\alpha)} w^2 \di \meas_{N-1,N} }\, .
\end{equation}
It follows that $w$  minimizes the Rayleigh quotient and therefore it is an eigenfunction associated to $\lambda_{N-1,N,\alpha}$ on $[0,r(\alpha)]$. Henceforth, $w$ is a multiple of $z$ and thus $ a u^{\#}(s)= z^{\#}(s)$ on $s\in [0,s_0]$. Since $u^{\#}(0)=u^*(0)=z(0)=z^{\#}(0)$, we conclude that $a=1$, contradicting that $a>1$.
\label{chitilemma}
\end{proof}
\end{lemma}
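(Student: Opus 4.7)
The plan is to split into two cases. If $\alpha = v$, then $\lambda_X(\Omega) = \lambda_{N-1,N,v}$ saturates the Faber-Krahn-type bound from \cite{Polya}, so I would invoke the rigidity statement there to conclude directly that $(X,\sfd,\meas)$ is isomorphic to a spherical suspension and that $u^\ast = z$ pointwise on $[0,r(v)]$; in this case the conclusion is immediate.

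For the main case $\alpha < v$, the plan is to adapt the classical Chiti comparison argument to the metric measure setting. The first step is to establish a Talenti-type differential inequality for the decreasing rearrangement,
\begin{equation*}
    -\tfrac{d}{ds} u^{\#}(s) \leq \frac{\lambda_{N-1,N,\alpha}}{\mathcal{I}_{N-1,N}(s)^2} \int_0^s u^{\#}(t)\,dt, \qquad \text{for a.e. } s \in (0,v),
\end{equation*}
using the coarea formula, the sharp isoperimetric comparison with the model $(I_{N-1,N},\sfd_{eu},\meas_{N-1,N})$ available on $\RCD(N-1,N)$ spaces, and Cavalieri. The same computation performed on the one-dimensional model yields the corresponding equality for $z^{\#}$ on $(0,\alpha)$. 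As a separate ingredient I would record that $u^{\#}(\alpha) > 0$: otherwise $\meas(\{u=0\}\cap\Omega) \geq v-\alpha > 0$, contradicting the strict positivity of a first Dirichlet eigenfunction on $\Omega$.

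With these ingredients in hand, I would argue by contradiction. Suppose $z > u^\ast$ at some point of $(0,r(\alpha)]$; then $a := \max_{[0,\alpha]} z^{\#}/u^{\#} > 1$ and the infimum $s_0 := \inf\{s \in (0,\alpha] : a\, u^{\#}(s) = z^{\#}(s)\}$ is strictly positive, since $u^{\#}(0) = z^{\#}(0) > 0$ and both functions are continuous at $0$. Glue the two profiles as
\begin{equation*}
    w^{\#}(s) := \begin{cases} a\, u^{\#}(s), & s \in [0, s_0], \\ z^{\#}(s), & s \in [s_0, \alpha], \end{cases}
\end{equation*}
and push forward to $w(x) := w^{\#}(\meas_{N-1,N}[0,x])$ on $[0,r(\alpha)]$. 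Because the differential inequality is preserved on each piece and $w^{\#}$ is continuous at $s_0$, I expect that integration by parts (using $w^{\#}(\alpha)=0$) together with the identity $\mathcal{I}_{N-1,N}(\meas_{N-1,N}[0,x])$ being a power of $\sin(x)$ will give
\begin{equation*}
    \int_0^{r(\alpha)} (w')^2 \,d\meas_{N-1,N} \leq \lambda_{N-1,N,\alpha} \int_0^{r(\alpha)} w^2 \,d\meas_{N-1,N}.
\end{equation*}
Hence $w$ attains the infimum in the Rayleigh quotient on $[0,r(\alpha)]$ with Dirichlet condition at $r(\alpha)$, so by the already-recalled uniqueness up to scaling of the first model eigenfunction, $w$ is a multiple of $z$. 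On the upper piece $[s_0,\alpha]$ this forces the multiple to be $1$, whence on $[0,s_0]$ one gets $a\,u^{\#} = z^{\#}$; evaluating at $s=0$ and using $u^{\#}(0)=z^{\#}(0)>0$ yields $a = 1$, contradicting $a>1$.

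The main obstacle, as I see it, is the first step: establishing the Talenti differential inequality on a general $\RCD(N-1,N)$ space with the sharp constant $\mathcal{I}_{N-1,N}$. The classical smooth derivation relies on the coarea formula and pointwise manipulations of level sets that need to be replaced by their metric measure counterparts, and one also has to certify absolute continuity of $u^{\#}$ on $[0,v)$ in order to differentiate. Once that estimate and the corresponding integration-by-parts identity on the model are in place, the gluing/contradiction step above is essentially formal.
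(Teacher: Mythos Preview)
Your proposal is correct and follows essentially the same route as the paper: the same case split on $\alpha=v$ versus $\alpha<v$, the same Talenti-type differential inequality for $u^{\#}$ (which the paper obtains by citing \cite{Talenti}) together with the equality for $z^{\#}$, the same positivity $u^{\#}(\alpha)>0$, and the identical gluing/Rayleigh-quotient contradiction via $w^{\#}$. Your identified obstacle is exactly where the paper leans on external input, invoking \cite[Theorem 3.10 and Proposition 2.7]{Talenti} for the differential inequality and the absolute continuity of $u^{\#}$ rather than rederiving them from scratch.
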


We can now state and prove the generalization of Chiti's comparison Theorem to $\mathrm{RCD}(K,N)$ spaces.

\begin{theorem}[Chiti's Comparison Result in $\mathrm{RCD}(K,N)$ Spaces]
Fix $p>0$. Let $(X,\sfd,\meas)$ be an $\mathrm{RCD}(N-1,N)$ space, $\Omega \subset X$ be an open set such that $\meas(\Omega)=v \in (0,1)$, $\alpha\leq v$ be such that $\lambda_{N-1,N,\alpha}=\lambda_X(\Omega)$, $u$ be a first Dirichlet eigenfunction on $\Omega$ and $z:[0,r(\alpha)]\to \R$ be an eigenfunction corresponding to $\lambda_{N-1,N,\alpha}$ scaled so that:
\begin{equation}
    \int_{\Omega}u^p \di \meas = \int_0^{r(v)} u^{* p} \di \meas_{N-1,N}=\int_0^{r(\alpha)} z^p \di \meas_{N-1,N}.
    \label{eq:chitiintegrals}
\end{equation}
Then there exists $r_1 \in (0,r(\alpha))$ such that:
$$
\begin{cases}
      u^{*}(s) \leq z(s) & \text{for } 0 \leq s \leq r_1, \\
    z(s) \leq u^{*}(s) & \text{for } r_1 \leq s \leq r(\alpha).
  \end{cases}
  $$
\begin{proof}
The case $\alpha=v$ is trivial since it implies, as mentioned above, that $z(s)=u^*(s)$ for every $s\in [0, r(v)]$. Thus suppose $\alpha<v$. If $z(0)<u^{*}(0)$, then there exists $a>1$ such that $az(0)=u^{*}(0)$. But then in that case, by Lemma \ref{lem:zu*}, it holds $az(s)\leq u^{*}(s)$ for all $s\in [0,r(\alpha)]$ in which case the equality of integrals in \eqref{eq:chitiintegrals}  would contradict that $a>1$. Hence, we must have $z(0)\geq u^{*}(0)$.\\
Assume $z(0)=u^{*}(0)$. Then by Lemma \ref{lem:zu*}, we see that the second equality in \eqref{eq:chitiintegrals} can hold only if $\alpha=v$, in which case the statement trivially holds as we discussed above.\\

Now let us focus on the case $z(0)>u^{*}(0)$, where we know $\alpha<v$ holds. Firstly, note that if $z(s)$ minimizes the Rayleigh quotient, so does $z^{*}(s)$, thus  we must have $z=z^*$. Recall, by the estimate in \eqref{uestimate} and the corresponding estimate for $z^\#$, that  $u^\#$ and $z^\#$ are both absolutely continuous on $[0,\alpha)$. Next, define the set:
 \begin{equation*}
     S_1 :=  \{ s\in [0, \alpha] : u^\#(s)=z^\#(s), u^{\#}(t) \leq z^{\#}(t) \text{ for all } t\in [0,s]\}.
 \end{equation*}
and let $s_1$ be the supremum of this set.
Since $u^\#$ and $z^\#$ are both continuous and  $z^{\#}(\alpha)=0$ and $u^{\#}(\alpha)>0$ (see the proof of Lemma \ref{chitilemma}),  clearly this set is non-empty and closed  so that it includes its maximal element $s_1 \in (0,\alpha)$.

We further define:
\begin{equation}
   S_2 := \{ s\in (s_1, \alpha) : u^\#(s)=z^\#(s) \}\, . 
   \label{eq:s2def}
\end{equation}
Suppose this set is non-empty and let $s_2$ be the infimum of it.  By the continuity of the rearrangements, we infer that the set $S_2$ is closed and thus it contains its minimal element $s_2$. 

We claim that $s_2>s_1$.  Indeed, by continuity of the rearrangements, for $\epsilon>0$ small enough, we have: 
\begin{equation*}
    \int_0^s u^\#(t) - z^\# (t) \di t <0\, , \quad \text{ for all $s\in (0,s_1+\epsilon]$}\,.
\end{equation*}
Hence, using \eqref{uestimate} and \eqref{zestimate}, we obtain that for all  $s\in (s_1,s_1+\epsilon]$ it holds:
\begin{equation*}
  z^\#(s)-u^\#(s) = \int_{s_1}^s \frac{d}{\di t} \left( z^\#(t)-u^\#(t)\right) \di t  \leq  \int_{s_1}^s \frac{\lambda_{N-1,N,\alpha}}{I_{N-1,N}(t)^2} \left( \int_0^t u^\#(\xi)-z^\#(\xi) \di \xi \right) \di t<0\, .
\end{equation*}
Thus $z^\#(s)<u^\#(s)$ on $(s_1,s_1+\epsilon]$ and hence, by definition, $s_2>s_1$.

We next claim it must hold
$$u^{\#}(t)>z^\#(t), \quad \text{ for all $t \in (s_1,s_2)$ . }$$
Indeed,  otherwise we would have $u^{\#}(t)<z^\#(t)$ for all $t \in (s_1,s_2)$, which would contradict the definition of $s_1$ since $s_2>s_1$. Set:
\begin{equation*}
    w^{\#}(s):=
  \begin{cases}
     z^{\#}(s), & \text{for } s \in [0,s_1]\cup [s_2,\alpha]  \\
    u^{\#}(s), & \text{for } s\in[s_1,s_2]\, .
  \end{cases}
\end{equation*}
Again, by \eqref{uestimate}, \eqref{zestimate}, we see that for a.e. $s\in (0,s_1)\cup(s_1,s_2)\cup(s_2,\alpha)$ it holds:
\begin{equation}
     -\frac{d}{ds}w^{\#}(s)\leq \frac{\lambda_{N-1,N,\alpha}}{\mathcal{I}_{N-1,N}(s)^2}\int_0^{s} w^{\#}(t) \di t\, .
    \label{eq:westimate}
\end{equation}
Thus, setting $w(x):=w^{\#}(\meas_{N-1,N}[0,x])$ and proceeding as in the proof of Lemma \ref{chitilemma}, it follows that $w:[0, r(\alpha)]\to \R$ also minimizes the Rayleigh quotient corresponding to $\lambda_{N-1,N,\alpha}$,  thus $w^\#=z^\#$  which in turn implies that $u^\#(s)=z^\#(s)$ for all $s\in[s_1,s_2]$, which again contradicts the definition of $s_1$.
\\ Therefore the set $S_2$ in \eqref{eq:s2def} must be empty, i.e. it must be the case that $u^\#(s)>z^\#(s)$ on $s\in (s_1,\alpha]$.
\end{proof}
\label{ChitiThm}
\end{theorem}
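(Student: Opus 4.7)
The plan is to translate the statement to the decreasing rearrangements $u^{\#},z^{\#}$ on $[0,\alpha]$, where the differential inequality \eqref{uestimate} and the equality \eqref{zestimate} provide the right calculus to implement a comparison argument. First I would reduce to the non-trivial case. If $\alpha=v$ then Lemma~\ref{lem:zu*} already forces $z\equiv u^{*}$ (after enforcing the common scaling via \eqref{eq:chitiintegrals}) and the conclusion is immediate. For $\alpha<v$, if $z(0)<u^{*}(0)$ then a rescaling $a>1$ with $az(0)=u^{*}(0)$ would by Lemma~\ref{lem:zu*} give $az\leq u^{*}$ on $(0,r(\alpha)]$, and integrating the $p$th power against $\meas_{N-1,N}$ together with \eqref{eq:chitiintegrals} contradicts $a>1$; the equality $z(0)=u^{*}(0)$ with $\alpha<v$ would force $u^{*}\equiv 0$ on $(r(\alpha),r(v))$, impossible by strict positivity of first eigenfunctions. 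So one may assume $z^{\#}(0)>u^{\#}(0)$.

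Since also $z^{\#}(\alpha)=0<u^{\#}(\alpha)$, continuity produces a first crossing point $s_{1}\in(0,\alpha)$ with $u^{\#}\leq z^{\#}$ throughout $[0,s_{1}]$ and equality at $s_{1}$. By the monotonicity of $x\mapsto\meas_{N-1,N}[0,x]$, it suffices to show $u^{\#}>z^{\#}$ throughout $(s_{1},\alpha]$; this then transfers to the desired crossing structure of $u^{*}$ and $z$ on $[0,r(\alpha)]$ with $r_{1}$ determined by $\meas_{N-1,N}[0,r_{1}]=s_{1}$. Subtracting \eqref{uestimate} from \eqref{zestimate} gives the key differential inequality
\begin{equation*}
\frac{d}{ds}(z^{\#}-u^{\#})(s)\;\leq\;\frac{\lambda_{N-1,N,\alpha}}{\mathcal{I}_{N-1,N}(s)^{2}}\int_{0}^{s}(u^{\#}-z^{\#})(t)\,\di t,
\end{equation*}
and since $u^{\#}<z^{\#}$ strictly on a right-neighbourhood of $0$, the integral $\int_{0}^{s_{1}}(u^{\#}-z^{\#})$ is strictly negative, so integrating from $s_{1}$ yields $u^{\#}>z^{\#}$ on a right-neighbourhood of $s_{1}$.

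If there were a further crossing $s_{2}>s_{1}$, the same argument forces $u^{\#}>z^{\#}$ strictly on $(s_{1},s_{2})$ (otherwise $u^{\#}\leq z^{\#}$ on $[0,s_{2}]$ would contradict maximality of $s_{1}$), and I would assemble the competitor
\begin{equation*}
w^{\#}(s):=\begin{cases} z^{\#}(s), & s\in[0,s_{1}]\cup[s_{2},\alpha], \\ u^{\#}(s), & s\in[s_{1},s_{2}], \end{cases}
\end{equation*}
set $w(x):=w^{\#}(\meas_{N-1,N}[0,x])$ on $[0,r(\alpha)]$, and replay the computation from Lemma~\ref{chitilemma}. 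On each of the three pieces $w^{\#}$ satisfies the pointwise differential inequality driving \eqref{uestimate}; multiplying by $-(w^{\#})'\geq 0$, changing variables so that the sine weight of $\meas_{N-1,N}$ absorbs $\mathcal{I}_{N-1,N}^{2}$, and integrating by parts using $w^{\#}(\alpha)=0$ would yield $\int(w')^{2}\,\di\meas_{N-1,N}\leq\lambda_{N-1,N,\alpha}\int w^{2}\,\di\meas_{N-1,N}$. Hence $w$ minimises the Rayleigh quotient and is itself a first eigenfunction; one-dimensional uniqueness of $z$ (up to scaling) together with the match $w^{\#}(s_{1})=z^{\#}(s_{1})>0$ forces $w\equiv z$, so $u^{\#}=z^{\#}$ on $[s_{1},s_{2}]$, contradicting the strict inequality. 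The main obstacle is exactly this competitor step: one must verify that the piecewise-defined $w^{\#}$, with possible corners at $s_{1},s_{2}$, produces no spurious boundary terms when fed through the integration-by-parts chain and that the differential inequality survives the pasting — which is precisely the cancellation already exploited in Lemma~\ref{chitilemma}, so the same machinery carries over once continuity at the corners is recorded.
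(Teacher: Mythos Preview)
Your proposal is correct and follows essentially the same route as the paper's own proof: reduce to the case $z^{\#}(0)>u^{\#}(0)$ via Lemma~\ref{lem:zu*} and the integral constraint, locate the first crossing $s_{1}$, use the differential inequality obtained from \eqref{uestimate} and \eqref{zestimate} to get strict separation just after $s_{1}$, then assume a second crossing $s_{2}$ and build the piecewise competitor $w^{\#}$ whose Rayleigh quotient is $\leq\lambda_{N-1,N,\alpha}$, forcing $w^{\#}=z^{\#}$ and a contradiction. The paper handles the corner issue at $s_{1},s_{2}$ exactly as you anticipate, by observing that the differential inequality holds a.e.\ on each piece and that the integration-by-parts computation from Lemma~\ref{chitilemma} carries over verbatim.
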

To obtain the reverse Hölder inequality as a corollary of the theorem above, one can use the following Lemma:
\begin{lemma}
Let $b>0$, and let $\phi:\mathbb{R}\to \mathbb{R}$ be a convex function. Let $f,g :\mathbb{R}\to \mathbb{R}$ be functions that satisfy the following conditions for all $y \in \mathbb{R}$:
\begin{align}
   \int_0^b g(x) \, \di x&=  \int_0^b f(x)\, \di x\\
   \int_0^b \{g(x)-y\}^+ \di x &\leq  \int_0^b \{f(x)-y\}^+ \di x 
\end{align}
where $\{h(x)\}^+:= \max(h(x),0)$, then:
\begin{equation}
   \int_0^b \phi\left(g(x)\right) \di x \leq  \int_0^b \phi\left(f(x)\right) \di x.  
\end{equation}
\begin{proof}
This is a special case of \cite[Theorem 9]{Hardy}.
\end{proof}
\label{hardylemma}
\end{lemma}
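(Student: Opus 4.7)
The plan is to write the convex function $\phi$ as a non-negative superposition of the elementary convex functions $z\mapsto(z-y)^+$ and $z\mapsto(y-z)^+$, and then apply Fubini to reduce the claim layer-by-layer to the two hypotheses.

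First I would upgrade the hypothesis to a dual form: from the pointwise identity $(z-y)^+-(y-z)^+=z-y$ and $\int_0^b g\,\di x=\int_0^b f\,\di x$, one gets, for every $y\in\R$,
\begin{equation*}
\int_0^b(g-y)^+\di x-\int_0^b(f-y)^+\di x=\int_0^b(y-g)^+\di x-\int_0^b(y-f)^+\di x,
\end{equation*}
so the assumed $\int_0^b(g-y)^+\di x\leq\int_0^b(f-y)^+\di x$ is equivalent to its mirror form $\int_0^b(y-g)^+\di x\leq\int_0^b(y-f)^+\di x$. Next I would invoke the integral representation of convex functions: fixing an arbitrary base point $y_0$,
\begin{equation*}
\phi(z)=\phi(y_0)+\phi'_+(y_0)(z-y_0)+\int_{y_0}^\infty(z-y)^+\di\nu(y)+\int_{-\infty}^{y_0}(y-z)^+\di\nu(y),
\end{equation*}
where $\nu$ is the non-negative Lebesgue--Stieltjes measure associated with the non-decreasing function $\phi'_+$ (i.e.\ the distributional second derivative of $\phi$). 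This follows from Fubini applied to $\phi'$ in the smooth case and extends to general convex $\phi$ by mollification.

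Substituting $z=g(x)$ (resp.\ $z=f(x)$), integrating on $[0,b]$, and swapping the order of integration by Fubini yields
\begin{equation*}
\int_0^b\phi(g)\,\di x=b\,\phi(y_0)+\phi'_+(y_0)\Bigl(\int_0^b g\,\di x-y_0 b\Bigr)+\int_{y_0}^\infty I_g^+(y)\,\di\nu(y)+\int_{-\infty}^{y_0}I_g^-(y)\,\di\nu(y),
\end{equation*}
where $I_g^+(y):=\int_0^b(g-y)^+\di x$ and $I_g^-(y):=\int_0^b(y-g)^+\di x$, together with the analogous identity for $f$. By the equal-integral hypothesis the constant and affine parts coincide on the two sides; by the first step, $I_g^{\pm}(y)\leq I_f^{\pm}(y)$ pointwise in $y$; and since $\nu\geq 0$, integrating these slice inequalities against $\di\nu$ preserves them. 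Summing gives $\int_0^b\phi(g)\,\di x\leq\int_0^b\phi(f)\,\di x$.

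The main technical obstacle is making the integral representation of $\phi$ rigorous in full generality and checking that Fubini applies. In the intended application this is harmless, since $f$ and $g$ will be bounded decreasing rearrangements and $\phi$ a power function, so every integrand in sight is integrable; for a general convex $\phi$ one approximates by smooth convex functions and passes to the limit by monotone or dominated convergence.
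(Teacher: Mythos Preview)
Your argument is correct and is essentially the classical proof of the majorization inequality: decompose an arbitrary convex function as an affine part plus a non-negative superposition of the ``angle'' functions $z\mapsto(z-y)^+$ and $z\mapsto(y-z)^+$, then integrate slice by slice. One small simplification: since the kernels $(z-y)^+$ and $(y-z)^+$ are non-negative and $\nu\geq 0$, Tonelli applies directly to the double integrals, so no separate integrability check or approximation by smooth convex functions is needed; the affine and constant contributions are controlled simply by the assumed integrability of $f$ and $g$ on $[0,b]$.

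By contrast, the paper does not give an argument at all: it merely cites the result as a special case of Theorem~9 in Hardy--Littlewood--P\'olya. Your write-up is therefore not a different route so much as an unpacking of the very theorem the paper invokes. The upside of your version is that it makes the note self-contained and shows transparently why the two hypotheses (equal integrals and domination of the layer-cake integrals) are exactly what is needed; the upside of the paper's version is brevity, since the result is classical.
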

Now, we are finally in a position to prove our first main result.
\begin{proof}[Proof of Theorem \ref{mainthm}]
As in the previous arguments, the case $\alpha=v$ means $\meass$ is a spherical suspension and hence the result is trivial. Now let us focus on the case $\alpha<v$. Extend $z^\#(s)$ to $[0,v]$ by taking it to be $0$ on $(\alpha,v)$. By Theorem \ref{ChitiThm}, we know that there exists $s_1$ such that $u^\#(s)\leq z^\#(s)$ for $s\in [0,s_1]$ and $z^\#(s)\leq u^\#(s)$ for $s\in [s_1,v]$. 
\\Now, if $s\in [0,s_1]$:
\begin{equation*}
    \int_0^s \left(u^\#(t)\right)^p \di t \leq \int_0^s \left(z^\#(t)\right)^p \di t.
\end{equation*}
If instead $s\in [s_1,v]$:
\begin{align*}
    \int_0^s \left(u^\#(t)\right)^p \di t &=\int_0^v \left(u^\#(t)\right)^p \di t-\int_s^v \left(u^\#(t)\right)^p \di t  \leq \int_0^v \left(u^\#(t)\right)^p \di t-\int_s^v \left(z^\#(t)\right)^p \di t \\
    &=\int_0^v \left(z^\#(t)\right)^p \di t-\int_s^v \left(z^\#(t)\right)^p \di t = \int_0^s \left(z^\#(t)\right)^p \di t\, .
\end{align*}
Hence, 
\begin{equation}\label{eq:intuleqintz}
    \int_0^s \left(u^\#(t)\right)^p \di t \leq \int_0^s \left(z^\#(t)\right)^p \di t, \quad \text{for all } s\in[0,v]\, .
\end{equation}
From the fact that $u^\#$ is non-increasing, it follows that for every $y \in \mathbb{R}$ there exists $\xi \in[0,v]$ such that:
\begin{align}
    \int_0^v \{\left(u^\#(t)\right)^p -y\}^+ \di t =\int_0^{\xi} \left(u^\#(t)\right)^p -y \, \di t \leq\int_0^{\xi} \left(z^\#(t)\right)^p -y\,  \di t \\
    \leq \int_0^{\xi} \{\left(z^\#(t)\right)^p -y\}^+ \di t \leq \int_0^v \{\left(z^\#(t)\right)^p -y\}^+ \di t.
\end{align}
Thus the conditions of Lemma \ref{hardylemma} are satisfied.  Taking $\phi(x)=x^{\frac{q}{p}}$ for $q\geq p$, we can conclude that:
\begin{equation}
    \int_0^v \left(u^\#(t)\right)^q \di t \leq \int_0^v \left(z^\#(t)\right)^q \di t,
\end{equation}
from which the inequality \eqref{eq:holderineq} follows.\\
To get the rigidity statement, assume that equality holds for some fixed $q>p$. We claim that then equality holds for all $p'\in(p,q)$. To see this, note that if
\begin{equation}
    \int_0^v \left(u^\#(t)\right)^q \di t=\int_0^\alpha \left(z^\#(t)\right)^q \di t\, ,
\end{equation}
then applying Lemma \ref{hardylemma} with $\phi(x)=-x^{\frac{p'}{q}}$ (conditions of the lemma are satisfied again as shown above) for any $p'\in (p,q)$ we get that:
\begin{equation*}
    \int_0^v \left(z^\#(t)\right)^{p'} \di t\leq \int_0^v \left(u^\#(t)\right)^{p'} \di t\, .
\end{equation*}
Combining this with inequality \eqref{eq:holderineq} we get, for all $p'\in (p,q)$ that:
\begin{equation}
 \|u\|_{L^{p'}\left(\Omega,\meas\right)}= \|z\|_{L^{p'}\left([0,\alpha],\meas_{N-1,N}\right)} \, .
\end{equation}
Thus, by virtue of \cite[Theorem 1]{Klun}, it must be the case that $u$ and $z$ have the same distribution functions so that, in fact, $u^\#(s)=z^\#(s)$. But if $\alpha<v$ we have that $u^\#(\alpha)>0$ and $z^\#(\alpha)=0$. Thus, we must have $\alpha=v$, in which case $X$ is isomoprhic to a spherical suspension. The proof of Theorem \ref{mainthm} is now complete.
\end{proof}
In the proof the Stability  Theorem \ref{thm:ChitiStability}, we will make use of the co-area formula stated as in the next proposition (see for instance \cite[Proposition 3.7]{Talenti} and  \cite[Remark 4.3]{Miranda} for more details).

\begin{proposition}[Coarea formula]
Let $K\in \mathbb{R}$ and $N \in (1,\infty)$, let $(X,\sfd,\meas)$ be an $\mathrm{RCD}(K,N)$ space and let $\Omega \subset X$ be an open domain. If $u : \Omega \to [0,\infty)$ is an element of $W_0^{1,2}(\Omega)$, then for any $t > 0$ we have:
\begin{equation}
    \int_{\{u>t\}} \left|\nabla u\right|_{w} \di \meas= \int_t^\infty \Per\left(\{u>s\}\right) \di s\, .
    \label{eq:coarea}
\end{equation}
\end{proposition}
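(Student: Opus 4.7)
The plan is to deduce this from the general BV coarea formula on metric measure spaces. Since $u\in W_0^{1,2}(\Omega)$, extending $u$ by zero outside $\Omega$ yields an element of $W^{1,2}(X,\sfd,\meas)\subset \BV(X,\sfd,\meas)$, whose total variation measure is absolutely continuous with respect to $\meas$ with density $|\nabla u|_{w}$ (this identification between the Sobolev slope and the BV variation for $W^{1,2}$ functions is known in the $\RCD$ setting; it is the statement behind the citation of \cite{Miranda} above). The unweighted BV coarea formula of Miranda then reads, for any $v\in \BV(X,\sfd,\meas)$,
\begin{equation*}
|Dv|(X) \;=\; \int_{-\infty}^{\infty} \Per(\{v>s\}) \, \di s \, .
\end{equation*}

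The next step is to upgrade this to the weighted version
\begin{equation*}
\int_X \phi(u)\,|\nabla u|_w \,\di\meas \;=\; \int_{-\infty}^{\infty} \phi(s)\,\Per(\{u>s\}) \,\di s
\end{equation*}
for every Borel $\phi\colon \R\to[0,\infty]$. I would first establish it for indicators $\phi=\mathbf{1}_{(a,b)}$, $0\le a<b$, by applying the unweighted coarea to the truncated function $v:=\min(\max(u-a,0),b-a)$: by the locality/chain rule for the minimal weak upper gradient (standard in $\RCD$ spaces), $|\nabla v|_w = |\nabla u|_w \mathbf{1}_{\{a<u<b\}}$, while the superlevel sets of $v$ coincide with those of $u$ translated, giving the identity in this case. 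Linearity in $\phi$ extends it to non-negative simple functions, and monotone convergence handles arbitrary non-negative Borel $\phi$.

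Finally, to obtain \eqref{eq:coarea}, I would specialise to $\phi=\mathbf{1}_{(t,\infty)}$: the left-hand side becomes $\int_{\{u>t\}}|\nabla u|_w\,\di\meas$ and the right-hand side becomes $\int_t^\infty \Per(\{u>s\})\,\di s$, as desired. The main obstacle in this route is the non-trivial step identifying the Sobolev variation with $|\nabla u|_w \meas$ and verifying the locality/chain rule needed to truncate; both are standard in Cheeger's and Ambrosio--Gigli--Savaré's theory of Sobolev/BV spaces on doubling Poincaré (and in particular $\RCD$) spaces, and this is precisely what lets the authors invoke \cite{Talenti, Miranda} for the one-line proof.
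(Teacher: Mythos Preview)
Your proposal is correct and matches the route the paper points to: the paper does not give its own proof of this proposition but simply cites \cite[Proposition 3.7]{Talenti} and \cite[Remark 4.3]{Miranda}, and your sketch is precisely the standard derivation from Miranda's BV coarea formula (identify $|Du|=|\nabla u|_w\meas$ for $u\in W^{1,2}$, truncate via $\min(\max(u-a,0),b-a)$ using locality of the minimal weak upper gradient, then pass to general $\phi$ by monotone approximation and specialise to $\phi=\mathbf{1}_{(t,\infty)}$). Nothing is missing.
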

We are now in position to prove Theorem \ref{thm:ChitiStability}.
\begin{proof}[Proof of Theorem \ref{thm:ChitiStability}]
Firstly note that if $Q$ is an unbounded subset of $(1,\infty)$, then having
\begin{equation*}
    \|z\|_{L^q\left((0,r(\alpha)),\meas_{N-1,N}\right)}-\|u^*\|_{L^q\left(0,r(v)),\meas_{N-1,N}\right)}<\delta
\end{equation*}
for all $q\in Q$ implies:
\begin{equation}\label{eq:z0-u0}
    z^\#(0)-u^\#(0)=\|z\|_{L^\infty(0,r(\alpha))}-\|u^*\|_{L^\infty(0,r(v))}<\delta.
\end{equation}
Now by the assumptions of the theorem, there exists $s_1 \in (0,v)$ such that:
\begin{equation}\label{eq:int0s1=ints1v}
    \int_0^{s_1}z^\#(s)- u^\#(s) \di s = \int_{s_1}^{v}u^\#(s)- z^\#(s) \di s.
\end{equation}
Furthermore, using the estimates \eqref{uestimate} and \eqref{zestimate} and applying \eqref{eq:intuleqintz} with $p=1$ one can infer:
\begin{equation*}
    \frac{d}{ds}\left(z^\#(s)-u^\#(s)\right) \leq \frac{\lambda_{N-1,N,\alpha}}{\mathcal{I}_{N-1,N}(s)^2}\int_0^{s} u^{\#}(t) - z^\#(t) \di t \leq 0, \quad \text{for a.e. $s \in [0,v]$.}
\end{equation*}
Thus  $z^\#(s)-u^\#(s)$ is decreasing and,  using \eqref{eq:z0-u0} -\eqref{eq:int0s1=ints1v},  we get the estimate:
\begin{multline*}
    \int_\alpha^{\alpha+\sqrt{\delta}} u^\#(s) \di s =\int_\alpha^{\alpha+\sqrt{\delta}} u^\#(s) -z^\#(s) \di s \leq \int_{s_1}^v u^\#(s)- z^\#(s) \di s \\ =\int_0^{s_1}z^\#(s)- u^\#(s) \di s \leq s_1 (z^\#(0)-u^\#(0)) \leq v \delta.
\end{multline*}
Hence, since $u^\#$ is continuous, by the mean value theorem for integrals, there exists $y \in (\alpha,\alpha+\sqrt{\delta})$ such that:
\begin{equation} \label{eq:yestimate}
  u^\#(y)= \frac{1}{\sqrt{\delta}}\int_\alpha^{\alpha+\sqrt{\delta}} u^\#(s) \di s \leq v\sqrt{\delta} \, .
\end{equation}
Differentiating both sides of \eqref{eq:coarea} one can see that:
\begin{equation}
    -\frac{d}{dt}\left( \int_{\{u>t\}} \left|\nabla u\right|_{w} \di \meas\right)= \Per\left(\{u>t\}\right)\, , \quad \text{for a.e. $t$}\, .
\end{equation}
In particular, display (41) in \cite{Talenti} reads (from here on, for simplicity of notation, we write $\lambda$ to denote $\lambda_X(\Omega)=\lambda_{N-1,N,\alpha}$):
\begin{equation*}
    \mathcal{I}_{N-1,N}(\mu_u(t))^2\leq \left(\Per\left(\{u>t\}\right)\right)^2 \leq -\lambda \mu_u'(t) \int_0^{\mu_u(t)} u^{\#}(s) \di s\, .
\end{equation*}
Dividing the inequality by $ \mathcal{I}_{N-1,N}(\mu_u(t))^2$, we obtain:
\begin{equation*}
    \frac{\left(\Per\left(\{u>t\}\right)\right)^2}{ \mathcal{I}_{N-1,N}(\mu_u(t))^2}\leq \frac{-\lambda \mu_u^{'}(t)}{\mathcal{I}_{N-1,N}(\mu_u(t))^2} \int_0^{\mu_u(t)} u^{\#}(s) \di s\, .
\end{equation*}
Integrating  from $u^\#(y)$ to $u^\#(0)$ yields:
\begin{equation*}
   \int_{u^\#(y)}^{u^\#(0)} \frac{\left(\Per\left(\{u>t\}\right)\right)^2}{ \mathcal{I}_{N-1,N}(\mu_u(t))^2} \di t \leq \int_{u^\#(y)}^{u^\#(0)}\frac{-\lambda \mu_u^{'}(t)}{\mathcal{I}_{N-1,N}(\mu_u(t))^2} \int_0^{\mu_u(t)} u^{\#}(s) \di s \di t\, .
\end{equation*}
Performing the change of variable  $\xi=\mu_u(t)$, noting that $\mu_u(u^\#(y))\leq y\leq \alpha +\sqrt{\delta}$ and applying \eqref{eq:intuleqintz} it follows that:
\begin{equation}\label{eq:3.30}
    \int_{u^\#(y)}^{u^\#(0)} \frac{\left(\Per\left(\{u>t\}\right)\right)^2}{ \mathcal{I}_{N-1,N}(\mu_u(t))^2} \di t \leq \int_0^{\alpha+\sqrt{\delta}}\frac{\lambda}{\mathcal{I}_{N-1,N}(\xi)^2}\int_0^\xi z^\#(s) \di s \di \xi\, .
\end{equation}
We next estimate the right hand side of \eqref{eq:3.30} by splitting the integral in the two contributions: on $[0, \alpha]$ and on $[\alpha, \alpha+\delta]$.
The former contribution is controlled by using \eqref{zestimate}:
\begin{equation}\label{eq:3.31}
    \int_0^{\alpha}\frac{\lambda}{\mathcal{I}_{N-1,N}(\xi)^2}\int_0^\xi z^\#(s) \di s \di \xi= z^\#(0)-z^\#(\alpha)=z^\#(0). 
\end{equation}
 Noting that $z^\#=0$ on $(\alpha,\alpha+\sqrt{\delta})$ and that anything that depends on $\alpha$ is in fact fixed by fixing $\lambda$, the latter contribution can be estimate as:
\begin{align}
    \int_\alpha^{\alpha+\sqrt{\delta}}\frac{\lambda}{\mathcal{I}_{N-1,N}(\xi)^2}\int_0^\xi z^\#(s)\di s \di \xi &= \int_\alpha^{\alpha+\sqrt{\delta}}\frac{\lambda}{\mathcal{I}_{N-1,N}(\xi)^2}\int_0^\alpha z^\#(s) \di s \di \xi  \nonumber \\ 
    &\leq C(N,v,\lambda)\sqrt{\delta} \, , \label{eq:3.32}
\end{align}
where in the last equality we used that $\mathcal{I}_{N-1,N}$ does not vanish on the compact set $[\alpha,v]$ and hence is bounded below by a constant that only depends on $N,\lambda,v$.
 Combining \eqref{eq:3.30}, \eqref{eq:3.31} and \eqref{eq:3.32} gives that (from here on $C(N,v,\lambda)$ may change from line to line):
\begin{equation*}
    \frac{1}{u^\#(0)-u^\#(y)}\int_{u^\#(y)}^{u^\#(0)} \frac{\left(\Per\left(\{u>t\}\right)\right)^2}{ \mathcal{I}_{N-1,N}(\mu_u(t))^2} \di t \leq \frac{z^\#(0)+ C(N,v,\lambda) \sqrt{\delta}}{u^\#(0)-u^\#(y)}\, ,
\end{equation*}
so that there exists $t_0 \in (u^\#(y),u^\#(0))$ such that:
\begin{align}
    \frac{\left(\Per\left(\{u>t_0\}\right)\right)^2}{ \mathcal{I}_{N-1,N}(\mu_u(t_0))^2} & \leq \frac{z^\#(0)+ C(N,v,\lambda) \sqrt{\delta}}{u^\#(0)-u^\#(y)} \nonumber \\ 
    &\leq \frac{u^\#(0)+ \delta+ C(N,v,\lambda) \sqrt{\delta}}{u^\#(0)-u^\#(y)} = 1 +\frac{u^\#(y)+\delta+ C(N,v,\lambda) \sqrt{\delta}}{u^\#(0)-u^\#(y)} \, .  \label{eq:3.33}
\end{align}
Now, pick $\tilde{\delta}>0$ in the statement  of Theorem \ref{thm:ChitiStability} such that
\begin{equation*}
    z^\#(0)-\tilde{\delta}>\frac{z^\#(0)}{2} \quad \text{and} \quad 
    \frac{z^\#(0)}{2}-v\sqrt{\tilde{\delta}}>\frac{z^\#(0)}{4}\, .
\end{equation*}
Note  that $\tilde{\delta}>0$  depends only on $N,v,\lambda$. Then,  using \eqref{eq:z0-u0} and \eqref{eq:yestimate},  we get:
 \begin{equation*}
    u^\#(0)-u^\#(y)>z^\#(0)- \delta -u^\#(y)\geq \frac{z^\#(0)}{2}-v\sqrt{\delta}\geq\frac{z^\#(0)}{4}\, ,  \quad \text{for any  $\delta \in (0,\tilde{\delta})$}\, .
 \end{equation*}
 Plugging \eqref{eq:yestimate} and the last estimate into \eqref{eq:3.33} yields:
\begin{equation*}
     \frac{\left(\Per\left(\{u>t_0\}\right)\right)^2}{ \mathcal{I}_{N-1,N}(\mu_u(t_0))^2} \leq 1+ C(N,v,\lambda) (\sqrt{\delta} +\delta)\,, \quad \text{for any $\delta \in (0,\tilde{\delta})$}\, .
\end{equation*}
Picking $\tilde{\delta}>0$ smaller if necessary, but again only depending on $N,v,\lambda$,  we infer that 
\begin{equation}\label{eq:finalStab}
     \frac{\left(\Per\left(\{u>t_0\}\right)\right)^2}{ \mathcal{I}_{N-1,N}(\mu_u(t_0))^2} \leq 1+ C(N,v,\lambda) \sqrt{\delta} \,, \quad \text{for any $\delta \in (0,\tilde{\delta})$}\, .
\end{equation}
Combining \eqref{eq:finalStab} with the B\'erard-Besson-Gallot-type quantitative improvement of  the L\'evy-Gromov isoperimetric inequality obtained for the present non-smooth framework in  \cite{Obata}[Lemma 3.1, 3.2], gives that there exists some constant $B=B(N)>0$ and $\tilde{\delta}= \tilde{\delta}(N,v,\lambda)>0$ such that
\begin{equation}
    B \left(\pi-\text{diam}(X)\right)^N \leq C(N,v,\lambda)\sqrt{\delta} \,, \quad \text{for any $\delta \in (0,\tilde{\delta})$}\, .
\end{equation}

This proves the first part of the statement. The second part follows from the first one by a standard compactness/contradiction argument, which uses the compactness of the class of $\mathrm{RCD}(N-1,N)$ spaces with respect to the measured-Gromov-Hausdorff topology and the maximal diameter Theorem \cite[Theorem 1.4]{Kett}.
\end{proof}

{
\footnotesize

{}

}
\end{document}